\theoremstyle{plain}
\newtheorem{theorem}{Theorem}[section]
\newtheorem{lemma}[theorem]{Lemma}
\newtheorem{proposition}[theorem]{Proposition}
\newtheorem{corollary}[theorem]{Corollary}
\numberwithin{equation}{section}
\theoremstyle{definition}
\newtheorem{definition}[theorem]{Definition}
\newtheorem{remark}[theorem]{Remark}
\DeclareMathOperator{\Mod}{-Mod}
\DeclareMathOperator{\lfMod}{-lfMod}
\DeclareMathOperator{\Hom}{Hom}
\DeclareMathOperator{\Tor}{Tor}
\DeclareMathOperator{\hd}{hd}
\DeclareMathOperator{\gd}{gd}
\DeclareMathOperator{\td}{td}
\DeclareMathOperator{\coker}{coker}
\DeclareMathOperator{\Image}{Im}
\DeclareMathOperator{\kernel}{ker}
\DeclareMathOperator{\reg}{reg}
\newcommand{\mi}{\mathfrak{m}}
\newcommand{\C}{{\mathscr{C}}}
\newcommand{\tC}{{\underline{\mathscr{C}}}}
\newcommand{\FI}{{\mathscr{FI}}}
\title{Two homological proofs of the Noetherianity of $FI_G$}
\author{Liping Li}
\address{College of Mathematics and Computer Science; Performance Computing and Stochastic Information Processing (Ministry of Education), Hunan Normal University, Changsha, Hunan 410081, China.}
\email{lipingli@hunnu.edu.cn.}
\thanks{The author is supported by the National Natural Science Foundation of China 11541002, the Construct Program of the Key Discipline in Hunan Province, and the Start-Up Funds of Hunan Normal University 830122-0037. He also thanks Wee Liang Gan for the proposal to find a homological proof of the Noetherianity of $\FI$; and thanks Eric Ramos for notifying the author of his results and kindly sending the author a preprint.}
\begin{document}

\begin{abstract}
We give two homological proofs of the Noetherianity of the category $FI$, a fundamental result discovered by Church, Ellenberg, Farb, and Nagpal.
\end{abstract}

\maketitle

\section{Preliminaries}

The \emph{locally Noetherian property} of the category $\FI$, whose objects and morphisms are finite sets and injections between those sets respectively, was firstly established by Church, Ellenberg, and Farb in \cite{CEF} over fields of characteristic 0, and by them and Nagpal in \cite{CEFN} over arbitrary commutative Noetherian rings. Another proof of this fundamental result was obtained in \cite{SS} by Sam and Snowden using the Gr\"{o}bner methods translated from commutative algebra. In this note we provide two new proofs via a homological approach developed recently in a series of paper \cite{CE, G, Li, L2, LR, LY, R} by Church, Ellenberg, Gan, Ramos, Yu and the author, and extend some results of finitely generated $\FI$-modules over commutative Noetherian rings to those satisfying a certain weaker condition over arbitrary commutative rings.

As the reader can see, a few results in this note have been established in literature. The purpose to include them in this note is to give a brief self-contained homological approach. Since these results are either well known or as expected, we often omit proofs or only give sketches, and advice the reader to check references if necessary.

Throughout this note we let $G$ be a group, $\C$ be the skeletal full subcategory of $\FI_G$ whose objects are parameterized by nonnegative integers. Let $k$ be a commutative ring with identity, and $\tC$ be the $k$-linearization of $\C$. By $G_n$ we mean the endomorphism group of the object parameterized by $n$. We briefly call upon the following definitions. For details, please refer to \cite{LR}.

A \emph{$\C$-module $V$} is a covariant functor from $\C$ to $k \Mod$, the category of $k$-modules. We denote $V_n$ the value of $V$ on the object parameterized by $n$. Let $\mi$ be the ideal of $\tC$ spanned by all non-invertible morphisms in $\C$. The \emph{torsion degree} and \emph{$s$-th homological degree} of $V$ are defined respectively as:
\begin{align*}
\td(V) & = \sup \{ n \mid \Hom_{\tC} (kG_n, V) \neq 0 \}, \\
\hd_s(V) & = \td (\Tor ^{\tC}_s (\tC/\mi, V)), \quad s \geqslant 0
\end{align*}
where the torsion degree is set to be $-\infty$ by convention whenever the above set is empty, and in that case $V$ is said to be \emph{torsion free}. We call the $0$-th homological degree the \emph{generating degree} of $V$, and denote it by $\gd(V)$.

\begin{definition}
Let $V$ be a $\C$-module. It is \emph{locally finite} if for every $n \geqslant 0$, $V_n$ is a finitely generated $kG_n$-module. It is \emph{generated in finite degrees} if $\gd(V) < \infty$.
\end{definition}

\begin{remark}
Clearly, a $\C$-module $V$ is finitely generated if and only if it is locally finite and generated in finite degrees.
\end{remark}

\begin{definition}
Let $V$ be a $\C$-module. An element $v \in V$ is \emph{torsion} if one has $\mi^n v = 0$ for $n \gg 0$. The $\C$-module $V$ is \emph{torsion} if every element in it is a torsion element.
\end{definition}

Clearly, a $\C$-module $V$ is torsion free if and only if it has no torsion elements.

\begin{lemma}
Every finitely generated torsion module has finite torsion degree.
\end{lemma}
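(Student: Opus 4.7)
The plan is to prove the stronger fact that $V_n = 0$ for all $n$ sufficiently large, from which $\td(V) < \infty$ is immediate.

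First I would fix a finite generating set $v_1, \ldots, v_r$ for $V$, guaranteed by the finite generation hypothesis, with each $v_i \in V_{n_i}$. Since $V$ is torsion, each generator satisfies $\mi^{N_i} v_i = 0$ for some integer $N_i \geq 1$. Set $M := \max_{i} (n_i + N_i)$.

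The crucial step is to observe that every morphism $\alpha \colon n_i \to n$ in $\C$ with $n - n_i \geq N_i$ lies in the ideal $\mi^{N_i}$ of $\tC$. To establish this, I would factor $\alpha$ into a composition of $n - n_i$ non-invertible morphisms through the chain $n_i \to n_i + 1 \to \cdots \to n$. In the $\FI_G$ setting, a morphism is a pair $(f, \rho)$ where $f$ is an injection and $\rho$ is a $G$-labeling of the source; one writes $f = f_{n-n_i} \circ \cdots \circ f_1$ as successive single-element inclusions, assigns the label $\rho$ to the first step, and assigns the identity label to every subsequent step. A direct computation of composition in $\FI_G$ shows that the labels propagate correctly and the total composition recovers $(f, \rho)$. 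Since each factor is non-invertible and there are at least $N_i$ of them, $\alpha \in \mi^{N_i}$, so $\alpha \cdot v_i \in \mi^{N_i} v_i = 0$.

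Finally, for any $n \geq M$, every element $w \in V_n$ is a $k$-linear combination of terms of the form $\alpha_{ij} \cdot v_i$ with $\alpha_{ij} \colon n_i \to n$. By the previous step, each such term vanishes, so $w = 0$. Hence $V_n = 0$ for $n \geq M$, and in particular $\td(V) < M < \infty$.

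I do not anticipate a substantial obstacle; the step that demands the most care is the factorization of arbitrary morphisms in $\C$ into sequences of non-invertible morphisms, since the $G$-labels in $\FI_G$ must be tracked when composing. Once this is in hand, the rest of the argument is essentially bookkeeping on a finite generating set.
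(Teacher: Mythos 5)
Your proof is correct and follows the same approach as the paper: choose a finite generating set, record each generator's degree $n_i$ and torsion exponent $N_i$, and bound $\td(V)$ by $\max_i(n_i+N_i)$. You supply the detail the paper leaves implicit --- that every morphism in $\C$ raising degree by at least $N_i$ factors through $N_i$ non-invertible morphisms and hence lies in $\mi^{N_i}$ --- and your seemingly stronger conclusion that $V_n=0$ for $n\geqslant M$ is in fact equivalent to the stated one, since $V$ is torsion.
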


\begin{proof}
Let $V$ be a finitely generated torsion module. Choose a finite set of generators $\{v_1, \, \ldots, \, v_s\}$ and suppose that $v_i \in V_{N_i}$. Since these generators are torsion elements, there exist $n_1, \, \ldots, \, n_s$ such that $\mi^{n_i} v_i = 0$. Let $n = \max \{n_1, \, \ldots, \, n_s \}$. Then $\mi^n v_i = 0$ for $1 \leqslant i \leqslant s$. Since $V$ is generated by $v_1, \, \ldots, \, v_s$, one knows
\begin{equation*}
\td(V) \leqslant \max \{ n_i + N_i \mid 1 \leqslant i \leqslant s \} < \infty.
\end{equation*}
\end{proof}

However, if $V$ is only generated in finite degrees, then $\td(V)$ might be $\infty$. For example, let $P = \tC e_0$ where $e_0$ is the identity of $G_0$. Then the module
\begin{equation*}
V = \bigoplus _{i \geqslant 1} (P /\mi^iP)
\end{equation*}
is generated in degree 0, but its torsion degree is $\infty$.

For $V \in \C \Mod$, there is a short exact sequence
\begin{equation*}
0 \to V_T \to V \to V_F \to 0,
\end{equation*}
where
\begin{equation*}
V_T = \{ v \in V \mid \mi^n v = 0 \text{ for } n \gg 0 \}
\end{equation*}
is the \emph{torsion part} of $V$, and $V_F$ is the \emph{torsion free part} of $V$. Clearly, $\td(V) = \td(V_T)$.

Let $\Sigma$ and $D$ be the \emph{shift functor} and \emph{derivative functor} introduced in \cite{CEF} and \cite{CE} respectively. The following result has been described in \cite[Proposition 2.4]{LY} for finitely generated $\FI$-modules. However, we should point out that the result holds in a much more general framework. That is, it holds for any groups $G$ and any $\FI_G$-modules.

\begin{lemma}
Let $V$ be a $\C$-module. Then $\gd(DV) = \gd(V) -1$ whenever $DV \neq 0$.
\end{lemma}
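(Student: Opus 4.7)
The plan is to establish $\gd(DV) \leq \gd(V) - 1$ and $\gd(DV) \geq \gd(V) - 1$ separately, both times exploiting the natural exact sequence
\begin{equation*}
V \xrightarrow{\iota} \Sigma V \xrightarrow{\pi} DV \to 0,
\end{equation*}
where $\pi$ is the defining surjection. Note that $\iota$ need not be injective, but this is not required below.

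For the upper bound, the key computation is $\gd(D(\tC e_n)) = n-1$: the map $(\tC e_n)_k = \tC(n, k) \to \tC(n, k+1) = (\Sigma \tC e_n)_k$ is post-composition with the standard inclusion $[k] \hookrightarrow [k+1]$, so the cokernel $(D\tC e_n)_k$ is spanned by morphisms in $\FI_G$ from $[n]$ to $[k+1]$ whose image contains $k+1$; this vanishes for $k < n-1$ and is nonzero for $k = n-1$. Since $D$ is right exact (as the cokernel of a natural transformation between exact functors), a surjection $P \twoheadrightarrow V$ with $P = \bigoplus_i \tC e_{d_i}$ and $\max_i d_i = \gd(V)$ yields a surjection $DP \twoheadrightarrow DV$, and $DP = \bigoplus_i D(\tC e_{d_i})$ is generated in degrees at most $\gd(V) - 1$.

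For the lower bound, set $d' = \gd(DV)$ and take $n > d' + 1$; I claim any $v \in V_n$ lies in $(\mi V)_n$, giving $\gd(V) \leq d' + 1$. View $v$ as an element of $(\Sigma V)_{n-1} = V_n$, and consider $\bar v = \pi(v) \in (DV)_{n-1}$. Because $n - 1 > d'$, we can write $\bar v = \sum_i \bar f_i \cdot \bar w_i$ with each $\bar f_i \in \C(m_i, n-1)$ non-invertible (so $m_i < n-1$) and $\bar w_i \in (DV)_{m_i}$. Lift each $\bar w_i$ to $\tilde w_i \in (\Sigma V)_{m_i} = V_{m_i + 1}$ and set $v' = v - \sum_i \bar f_i \cdot \tilde w_i \in (\Sigma V)_{n-1}$. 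Since $\pi(v') = 0$, there exists $u \in V_{n-1}$ with $v' = \iota(u)$. But $\iota: V_{n-1} \to V_n$ is the action of the non-invertible inclusion $[n-1] \hookrightarrow [n]$, so $\iota(u) \in (\mi V)_n$. And the $\Sigma$-action of $\bar f_i$ on $\tilde w_i$ corresponds to the $V$-action of the shifted morphism $\bar f_i \sqcup \id : [m_i + 1] \to [n]$, which remains non-invertible since $m_i + 1 < n$; hence $\bar f_i \cdot \tilde w_i \in (\mi V)_n$. Summing, $v \in (\mi V)_n$.

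The only delicate point is the identification of actions under the shift: an element of $\C(m_i, n-1)$ acting on $(\Sigma V)_{m_i}$ is really the action of its shift $f \sqcup \id \in \C(m_i + 1, n)$ on $V_{m_i + 1}$, and this shift preserves non-invertibility. With that identification in hand, the argument is insensitive to $G$ and to any finiteness assumption on $V$, which is precisely why the result of \cite[Proposition 2.4]{LY} extends to arbitrary groups and arbitrary $\FI_G$-modules.
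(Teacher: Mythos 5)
Your proof is correct, and it takes a genuinely different route from the paper's. The paper outsources the case $\gd(V) < \infty$ to \cite[Proposition~2.4]{LY} (which proceeds through the decomposition of $\Sigma(\tC e_n)$ into two projective summands), and then directly handles only the case $\gd(V) = \infty$: it extracts a quotient $V''$ of $V$ concentrated in degrees $> N$, observes $(DV'')_{n-1}\neq 0$ where $n$ is the bottom degree of $V''$, and uses right exactness of $D$ to push $\gd(DV)$ above $N$ for every $N$. Your argument instead proves the two inequalities $\gd(DV)\le\gd(V)-1$ and $\gd(DV)\ge\gd(V)-1$ from scratch. The upper bound is essentially the same computation that underlies \cite{LY}, packaged as $\gd(D(\tC e_n)) = n-1$ plus right exactness of $D$. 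The real novelty is the lower bound: starting from $v\in V_n$ with $n>\gd(DV)+1$, you lift a generating expression for $\pi(v)\in(DV)_{n-1}$ along $\Sigma V\twoheadrightarrow DV$, subtract, land in $\Image(\iota)$, and observe that both $\iota(u)$ and each $\bar f_i\cdot\tilde w_i$ (the latter because the $\Sigma$-action of $\bar f_i\in\C(m_i,n-1)$ is the $V$-action of the non-invertible $\bar f_i\sqcup\id\in\C(m_i+1,n)$) land in $(\mi V)_n$. This argument makes no finiteness assumption and treats finite and infinite $\gd(V)$ uniformly, so it is more self-contained than the paper's proof and makes the ``works for all $\C$-modules'' claim completely transparent; the trade-off is that the paper's version, by delegating the finite case, is shorter on the page. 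One small stylistic note: you only need a surjection $P\twoheadrightarrow V$ with $P$ a direct sum of $\tC e_{d_i}$ and $d_i\le\gd(V)$ for every $i$ (it is automatic that $\max_i d_i=\gd(V)$, but equality is not needed for the bound), and such a $P$ exists for any $V$ generated in finite degrees by taking one summand $\tC e_m$ for each element of a $kG_m$-generating set of $V_m$, $m\le\gd(V)$.
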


\begin{proof}
The proof provided in \cite{LY} actually works for all $\C$-modules generated in finite degrees since applying the shift functor to a projective module generated in a certain degree $n$ we always get a direct sum of two projective modules generated in degree $n$ and degree $n-1$ respectively, so we only have to show that $\gd(DV) = \infty$ when $\gd(V) = \infty$. In that case, for an arbitrarily large $N$, one can construct a short exact sequence
\begin{equation*}
0 \to V' \to V \to V'' \to 0
\end{equation*}
such that $V'$ is the submodule generated by $\bigoplus _{i \leqslant N} V_i$. Because $\gd(V) = \infty$, $V''$ cannot be 0. Let $n$ be the minimal number such that $V''_n \neq 0$. By definition, $n > N$. Applying the right exact functor $D$ one gets a surjection $DV \to DV'' \to 0$. Note that $DV''_{n-1} \neq 0$ and $DV''_i = 0$ for $i < n-1$. Therefore, $\gd(DV'') \geqslant n-1$. This forces $\gd(DV) \geqslant n-1 \geqslant N$. Since $N$ is arbitrarily chosen, one knows that $\gd(DV) = \infty$ as well.
\end{proof}

\section{The first proof}

In this section we let $k$ be a commutative Noetherian ring, and let $G$ be a finite group. Note that $kG_n$ is a left Noetherian ring. Denote the category of locally finite $\C$-modules by $\C \lfMod$. The reader can easily see that it is an abelian category. Moreover, the shift functor $\Sigma$ preserves locally finite modules.

\begin{lemma}
A $\C$-module $V \in \C \lfMod$ is finitely generated if and only if $\gd(V) < \infty$. Moreover, if $V \in \C \lfMod$ is torsion, then $V$ is  finitely generated if and only if $\td(V) < \infty$.
\end{lemma}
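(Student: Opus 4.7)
The first assertion is immediate under the standing hypothesis that $V \in \C\lfMod$: the Remark following the first definition states that finitely generated modules are precisely the locally finite ones that are generated in finite degrees, and ``generated in finite degrees'' is by definition $\gd(V) < \infty$. Nothing further is needed for this half.

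For the second assertion the ``only if'' direction is a direct citation of the preceding Lemma, which gives $\td(V) < \infty$ for every finitely generated torsion module. For the converse, my plan is to assume $V$ is torsion with $\td(V) = N < \infty$ and to show that $V_n = 0$ for all $n > N$. Once this is in hand, local finiteness lets me choose a finite $kG_i$-generating set of each of $V_0, V_1, \ldots, V_N$, and their union is a finite $\C$-module generating set for $V$.

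The real work is the vanishing claim $V_n = 0$ for $n > N$, and I would argue it by contradiction. Pick a nonzero $v \in V_n$ with $n > N$, and let $k \geqslant 1$ be minimal with $\mi^k v = 0$; since $\mi^{k-1} v \neq 0$, some $\alpha \in \mi^{k-1}$ satisfies $w := \alpha v \neq 0$. Because every non-invertible morphism of $\C$ strictly raises degree (morphisms from $[m]$ to itself form $G_m$ and are invertible), $w$ lies in a single homogeneous component $V_m$ with $m \geqslant n > N$. The minimality of $k$ gives $\mi w \subseteq \mi^k v = 0$, so $kG_m w$ is a nonzero $kG_m$-submodule of $V_m$ annihilated by $\mi$, which in turn produces a nonzero element of $\Hom_{\tC}(kG_m, V)$, contradicting $m > N = \td(V)$.

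The one delicate point, and the closest thing to an obstacle, is the localization to a single degree: one needs $w$ to be homogeneous and $kG_m w$ to be genuinely annihilated by $\mi$ rather than merely by $\mi^k$ applied to $v$. Both are handled by the observations that $\mi$ is the ideal spanned by non-invertible morphisms and is closed under right composition with elements of $kG_m$ (since composing a non-invertible morphism with an invertible one on either side remains non-invertible), after which the rest of the argument is just bookkeeping with generating sets.
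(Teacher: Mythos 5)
Your proof is correct and matches the paper's approach: the paper handles the first assertion and the ``only if'' direction of the second exactly as you do (via the Remark and the preceding Lemma), and dispatches the converse by invoking the inequality $\gd(V) \leqslant \td(V)$ for torsion modules as ``trivial.'' Your argument for that converse in fact establishes the stronger vanishing $V_n = 0$ for all $n > \td(V)$, thereby supplying the details the paper leaves implicit; the one point needing care---that $\alpha$ must be taken to be a single spanning morphism of $\mi^{k-1}$ (a composition of $k-1$ non-invertible injections) rather than an arbitrary linear combination, so that $w = \alpha v$ is homogeneous and $\mi w \subseteq \mi^k v = 0$---is exactly the delicacy you flag and resolve.
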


\begin{proof}
The first statement follows from Remark 1.2. The only if direction of the second statement follows from Lemma 1.4, and the other direction of this statement is trivial since for a torsion module one always has $\gd(V) \leqslant \td(V)$.
\end{proof}

The following lemma is crucial for proving the main result.

\begin{lemma}
Let $k$ be a commutative Noetherian ring. If $V \in \C \lfMod$ is finitely generated, so is $V_T$. In particular, $\td(V) < \infty$.
\end{lemma}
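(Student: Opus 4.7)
The plan is to reduce the claim to showing $\td(V) < \infty$ and then establish this by induction on $\gd(V)$ using the derivative functor $D$. I first observe that $V_T$ is automatically locally finite: each $(V_T)_n$ sits inside the finitely generated $kG_n$-module $V_n$, and since $k$ is Noetherian and $G_n$ is finite, $kG_n$ is Noetherian, so $(V_T)_n$ is finitely generated over $kG_n$. Since $V_T$ is torsion, the second part of Lemma 2.1 tells us $V_T$ is finitely generated if and only if $\td(V_T) = \td(V) < \infty$. So it remains to prove $\td(V) < \infty$.

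I would then induct on $d = \gd(V)$. The case $d = -\infty$ is vacuous. For the base case $d = 0$, $V$ is generated by $V_0$ via the (essentially unique) morphism $[0] \to [n]$, and since $G_n$ acts trivially on the image, $V$ presents itself as a tower of surjections $V_0 \twoheadrightarrow V_1 \twoheadrightarrow V_2 \twoheadrightarrow \cdots$. The ascending chain of kernels $\ker(V_0 \to V_n)$ stabilizes because $V_0$ is a finitely generated module over the Noetherian ring $k$, so the tower is eventually constant and $(V_T)_n = 0$ for $n$ large.

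For the inductive step $d \geq 1$, I would apply the derivative functor $D$. By the lemma on $\gd(DV)$ above (assuming $DV \neq 0$; the case $DV = 0$ forces the structure of $V$ to be rigid and admits a direct treatment), $\gd(DV) = d-1$, and $DV$ is locally finite since $\Sigma V$ is locally finite (each $kG_{n+1}$ being finitely generated over $kG_n$) and $DV$ is a quotient of $\Sigma V$. Hence $DV$ is finitely generated, and by the inductive hypothesis $\td(DV) < \infty$.

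The hard part is bootstrapping from $\td(DV) < \infty$ back to $\td(V) < \infty$. My plan is to exploit the four-term exact sequence
\[
0 \to K \to V \xrightarrow{\iota} \Sigma V \to DV \to 0,
\]
where $\iota$ is the canonical natural transformation. The transitive $G_{n+1}$-action on morphisms $[n] \to [n+1]$ shows $\mi K = 0$, so $K \subseteq V_T$; meanwhile $V/K$ embeds in $\Sigma V$, and a torsion element $v \in V_n$ maps via $\iota$ to a torsion element of $\Sigma V$ at a shifted degree and with annihilation order dropped by one, tying $\td(V/K)$ to $\td((\Sigma V)_T)$ and ultimately back to $\td(DV)$. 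The main obstacle, and the step I expect to be most delicate, is controlling $\td(K)$: the equation $\mi K = 0$ alone does not bound the degrees where $K$ can be nonzero, so I anticipate needing a secondary induction on the annihilation order of torsion elements — iteratively replacing $V$ by the submodule of elements killed by lower powers of $\mi$ — or a more refined diagram chase through the four-term sequence that bounds $\td(K)$ in terms of $\td(DV)$.
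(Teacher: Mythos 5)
Your overall strategy matches the paper's --- reduce to $\td(V)<\infty$, induct on $\gd(V)$, and use the derivative functor --- but you have a genuine gap exactly where you flag ``the hard part,'' and it is not a minor loose end. You propose to bootstrap from $\td(DV)<\infty$ back to $\td(V)<\infty$ via the four-term sequence $0\to K\to V\to\Sigma V\to DV\to 0$, noting that $\mi K=0$ so $K\subseteq V_T$. But bounding $\td(K)$ is essentially circular: $K$ is precisely the degree-one torsion of $V$, and the claim that it vanishes in high degrees is a shadow of the very statement you are trying to prove. Knowing $\mi K=0$, $K\subseteq V$, and $\gd(V)<\infty$ puts no a priori bound on the top degree in which $K$ is nonzero, and the secondary induction you gesture at (on annihilation order) is not carried out and would face the same difficulty at each stage.

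The paper sidesteps this by applying $D$ not to $V$ alone but to the torsion/torsion-free decomposition $0\to V_T\to V\to V_F\to 0$. Since $V_F$ is torsion free, the natural map $V_F\to\Sigma V_F$ is injective, so the snake lemma applied to the vertical maps into the shifted sequence yields a short exact sequence $0\to D(V_T)\to DV\to DV_F\to 0$. Now the key observation: $\Sigma V_T$ is torsion, so its quotient $D(V_T)$ is torsion, hence $D(V_T)$ embeds into $(DV)_T$. The induction hypothesis (applied to $DV$, which is locally finite with $\gd(DV)<\gd(V)$) gives $\td((DV)_T)<\infty$, hence $\td(D(V_T))<\infty$. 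Because $D(V_T)$ is torsion this forces $\gd(D(V_T))<\infty$, and Lemma 1.5 then gives $\gd(V_T)<\infty$, i.e.\ $V_T$ is finitely generated. So the missing idea in your proposal is to push the torsion/torsion-free short exact sequence through $D$ and exploit that $D(V_T)$ sits inside the torsion part of $DV$, rather than attempting to control the kernel $K$ directly.
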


\begin{proof}
Since $V \in \C \lfMod$, so is $V_T$ as $\C \lfMod$ is an abelian category. Moreover, by Remark 1.2, one knows that $\gd(V) < \infty$, and one has to show that $\gd(V_T) < \infty$. We use induction on $\gd(V)$ since this is supposed to be a finite number. Nothing needs to show for $\gd(V) = - \infty$, so we suppose $\gd(V) = n \geqslant 0$.

The short exact sequence
\begin{equation*}
0 \to V_T \to V \to V_F \to 0
\end{equation*}
induces the following commutative diagram
\begin{equation*}
\xymatrix{
0 \ar[r] & V_T \ar[r] \ar[d]^{\alpha} & V \ar[r] \ar[d]^{\beta} & V_F \ar[r] \ar[d]^{\delta} & 0\\
0 \ar[r] & \Sigma V_T \ar[r] \ar[d] & \Sigma V \ar[r] \ar[d] & \Sigma V_F \ar[r] \ar[d] & 0\\
0 \ar[r] & D(V_T) \ar[r] & DV \ar[r] & DV_F \ar[r] & 0
}
\end{equation*}
Note that $V_F$ is a torsion free module, so the map $\delta$ is injective according to \cite[Lemma 3.6]{CE}. Therefore, by the snake lemma, the bottom row is exact.

Clearly, all modules appearing in this diagram are locally finite. Moreover, by Lemma 1.5, $\gd(DV) < \gd(V)$. By the induction hypothesis, $(DV)_T$ is a finitely generated $\C$-module; or equivalently, $\td((DV)_T) < \infty$ by the previous lemma. Of course, in general one cannot expect $D(V_T) \cong (DV)_T$ since $DV_F$ might not be torsion free. However, since $\Sigma V_T$ is a torsion module (this can be checked easily using the definitions of torsion modules and the shift functor), its quotient $D(V_T)$ is torsion as well. Therefore, $D(V_T)$ is isomorphic to a submodule of the torsion part $(DV)_T$, so
\begin{equation*}
\td(D(V_T)) \leqslant \td((DV)_T) < \infty.
\end{equation*}
Consequently, $\gd(DV_T) < \infty$. By Lemma 1.5, $\gd(V_T)$ must be finite as well. That is, $V_T$ is finitely generated, and $\td(V) = \td(V_T) < \infty$. The conclusion then follows from induction.
\end{proof}

Now we are ready to give a new proof for the following well known result.

\begin{theorem}
The category $\FI_G$ is locally Noetherian over any commutative Noetherian ring $k$.
\end{theorem}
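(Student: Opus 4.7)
The plan is to prove the theorem by induction on $\gd(V)$, showing that every finitely generated $\C$-module is Noetherian; the base case $\gd(V) = -\infty$ forces $V = 0$ and is trivial.

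For the inductive step, fix a finitely generated $V$ with $\gd(V) = n \geqslant 0$ and a submodule $W \subseteq V$. A general fact I will use twice is that any finitely generated torsion module $X \in \C\lfMod$ is automatically Noetherian: Lemma 1.4 gives $\td(X) < \infty$, every submodule of $X$ is torsion, locally finite, and of torsion degree at most $\td(X)$, and Lemma 2.1 then promotes every submodule to being finitely generated. Applied once: Lemma 2.2 makes $V_T$ finitely generated, hence Noetherian, so $W_T := W \cap V_T$ is finitely generated. The problem reduces to showing that $U := W/W_T$, which embeds as a torsion-free submodule of the torsion-free quotient $V_F := V/V_T$, is finitely generated.

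Now the shift functor enters. Since $\Sigma$ is exact, the short exact sequence $0 \to U \to V_F \to V_F/U \to 0$ induces $0 \to \Sigma U \to \Sigma V_F \to \Sigma(V_F/U) \to 0$, and the canonical maps $U \to \Sigma U$ and $V_F \to \Sigma V_F$ are injective by \cite[Lemma 3.6]{CE} since $U$ and $V_F$ are torsion free. The snake lemma then produces an exact sequence
\begin{equation*}
0 \to K \to DU \to DV_F \to D(V_F/U) \to 0,
\end{equation*}
where $K = \ker(V_F/U \to \Sigma(V_F/U))$ is a torsion submodule of $V_F/U$. Because $V_F/U$ is finitely generated (as a quotient of $V_F$), Lemma 2.2 makes $(V_F/U)_T \supseteq K$ finitely generated, hence Noetherian by the general fact above, so $K$ is finitely generated. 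On the other hand, $\Sigma$ sends a projective $\tC$-module generated in degree $m$ to a finite direct sum of projectives generated in degrees $\leqslant m$ (as exploited in the proof of Lemma 1.5), so $\Sigma V_F$ and hence its quotient $DV_F$ are finitely generated, with $\gd(DV_F) \leqslant n-1$ by Lemma 1.5. The induction hypothesis applies to $DV_F$, making it Noetherian; thus the image of $DU \to DV_F$ is finitely generated, and $DU$ itself, as an extension of two finitely generated modules, is finitely generated. A second application of Lemma 1.5 then yields $\gd(U) < \infty$ (the degenerate case $DU = 0$ forces $U \cong \Sigma U$, which combined with local finiteness gives $\gd(U) \leqslant 0$), and Lemma 2.1 makes $U$ finitely generated; therefore $W$ is finitely generated as an extension of $W_T$ and $U$.

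The main obstacle is that the snake lemma does not deliver a direct inclusion $DU \hookrightarrow DV_F$: because $V_F/U$ need not be torsion free, the torsion kernel $K$ appears, and it is precisely here that Lemma 2.2 plays its crucial second role, applied now to the quotient $V_F/U$. The rest of the argument is a coordinated use of the shift/derivative calculus with Lemmas 1.5 and 2.1, together with the structural observation that finitely generated torsion locally finite modules are automatically Noetherian.
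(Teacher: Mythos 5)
Your proof is correct and rests on the same homological machinery and inductive strategy as the paper's first proof: apply the snake lemma to the shift sequence, use Lemma 2.2 to control the torsion kernel that obstructs $D$ from being exact, use the inductive hypothesis on the derivative to get finite generating degree, and finish with Lemmas 1.5 and 2.1. The only difference is organizational: you induct on $\gd(V)$ for an arbitrary finitely generated $V$ and split a submodule $W$ into $W_T$ and $W/W_T$, whereas the paper inducts on $n$ and works with a submodule $V$ of the free module $\tC e_n$, which is automatically torsion free, so the decomposition is unnecessary on that side.
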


\begin{proof}[The first proof]
By a standard homological argument, we only need to show the following statement: for $n \geqslant 0$, every submodule $V$ of the projective module $\tC e_n$ is finitely generated, where $e_n$ is the identity of the endomorphism group $G_n$. Since $\tC e_n$ is obviously a locally finite $\C$-module, by Lemma 2.1, it suffices to show $\gd(V) < \infty$.

We use induction on $n$. If $\tC e_n = 0$; that is, $n < 0$, nothing needs to show. Otherwise, the short exact sequence
\begin{equation*}
0 \to V \to \tC e_n \to W \to 0
\end{equation*}
induces the following commutative diagram
\begin{equation*}
\xymatrix{
0 \ar[r] & V \ar[r] \ar[d]^{\alpha} & \Sigma V \ar[r] \ar[d]^{\beta} & DV \ar[r] \ar[d]^{\delta} & 0\\
0 \ar[r] & \tC e_n \ar[r] \ar[d] & \Sigma (\tC e_n) \ar[r] \ar[d] & D (\tC e_n) \cong (\tC e_{n-1})^{n|G|} \ar[r] \ar[d] & 0\\
0 \ar[r] & W \ar[r] & \Sigma W \ar[r] & DW \ar[r] & 0.
}
\end{equation*}
As explained in the proof of \cite[Theorem 2.4]{L2}, the maps $\alpha$ and $\beta$ are injective, and the third column gives rises to two short exact sequences
\begin{align*}
& 0 \to K \cong \ker \delta \to DV \to U \to 0,\\
& 0 \to U \to (\tC e_{n-1})^{n|G|} \to DW \to 0
\end{align*}
with $\gd(K) = \td(K) = \td(W)$.

The induction hypothesis and the second short exact sequence tell us that $\gd(U) < \infty$. Since $\gd(W) < \infty$, the previous lemma tells us that $\gd(K) < \infty$. Therefore, from the first exact sequence we conclude that $\gd(DV) < \infty$. By Lemma 1.5, $\gd(V) < \infty$. The conclusion then follows from induction.
\end{proof}

\section{Modules presented in finite degrees}

In this section $G$ is an arbitrary group and $k$ is a commutative ring. Although modules with finite generating degrees naturally extend finitely generated modules, their homological behaviors are not good enough since submodules need not be generated in finite degrees again. Instead, we consider $\C$-modules $V$ \emph{presented in finite degrees}; that is, both $\gd(V)$ and $\hd_1(V)$ are finite. This happens if and only if there is a presentation
\begin{equation*}
P^1 \to P^0 \to V \to 0
\end{equation*}
such that both $\gd(P^0)$ and $\gd(P^1)$ are finite, as described in \cite{CE, R}.

We recall the following important result:

\begin{theorem}[\cite{CE, L2, R}]
Let $V$ be a $\C$-module over a commutative ring $k$. Then for $s \geqslant 1$,
\begin{align*}
\td(V) & \leqslant \gd(V) + \hd_1(V) - 1,\\
\hd_s(V) & \leqslant \gd(V) + \hd_1(V) + s - 1.
\end{align*}
In particular, $\td(V)$ is finite.
\end{theorem}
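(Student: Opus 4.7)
My plan is to prove both inequalities simultaneously by induction on $\gd(V)$. Set $a = \gd(V)$ and $b = \hd_1(V)$; by hypothesis both are finite. The base case $a = -\infty$ forces $V = 0$, so the bounds hold vacuously, and I may assume $a \geqslant 0$.

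For the inductive step I would first verify that $DV$ has strictly smaller invariants. By Lemma 1.5, $\gd(DV) = a - 1$. To bound $\hd_1(DV)$, apply the right-exact functor $D$ to a projective presentation $P^1 \to P^0 \to V \to 0$ with $\gd(P^0) = a$ and $\gd(P^1) = b$: since $D(\tC e_n)$ is a direct sum of copies of $\tC e_{n-1}$, the induced presentation $DP^1 \to DP^0 \to DV \to 0$ gives $\hd_1(DV) \leqslant b - 1$. The inductive hypothesis then yields $\td(DV) \leqslant a + b - 3$ and $\hd_s(DV) \leqslant a + b + s - 3$ for $s \geqslant 1$.

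To transfer these bounds back to $V$, I would analyze the long exact sequence for $\Tor^{\tC}_*(\tC/\mi, -)$ associated to $0 \to V \to \Sigma V \to DV \to 0$ (exact when $V$ is torsion-free, by \cite[Lemma 3.6]{CE}), using also that $\hd_s(\Sigma V) \leqslant \hd_s(V)$ (since $\Sigma$ is exact and sends each $\tC e_n$ to a projective with the same generating degree, so a minimal resolution of $V$ gives a resolution of $\Sigma V$ of the same generating degrees). This expresses $\Tor_s(\tC/\mi, V)$ as an extension involving $\Tor_{s+1}(\tC/\mi, DV)$ and a subobject of $\Tor_s(\tC/\mi, \Sigma V)$, from which the desired bounds on $\hd_s(V)$ and $\td(V)$ should fall out by a degree count.

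The main obstacle is two-fold. First, the bound $\hd_s(\Sigma V) \leqslant \hd_s(V)$ is not strict in general, so a naive use of the long exact sequence is circular; one must sharpen it by a more careful analysis of the image of $\Tor_s(\tC/\mi, V) \to \Tor_s(\tC/\mi, \Sigma V)$, probably exploiting that the natural transformation $V \to \Sigma V$ lowers internal degrees in a controlled way. Second, when $V$ has torsion the sequence $0 \to V \to \Sigma V \to DV \to 0$ acquires a kernel $V_T$; I would then decompose $V$ via $0 \to V_T \to V \to V_F \to 0$, apply the above argument to the torsion-free quotient $V_F$, and control $V_T$ via the $3 \times 3$ diagram from the proof of Lemma 2.2, in particular the embedding $D(V_T) \hookrightarrow (DV)_T$ established there, so that $\td(V) = \td(V_T)$ inherits the required bound from the inductive one on $\td(DV)$.
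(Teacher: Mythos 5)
The paper does not actually prove Theorem 3.1; Remark 3.2 explicitly says the inequalities are quoted from \cite{CE, L2, R}. So there is no in-paper proof to compare against, and your proposal has to stand on its own. The overall shape — induct on $\gd(V)$ using the derivative functor $D$ and the sequence $0 \to V \to \Sigma V \to DV \to 0$ — is a natural idea and is close in spirit to the diagram arguments used elsewhere in the paper, but there are two gaps, and neither is resolved.

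First, the bound $\hd_1(DV) \leqslant \hd_1(V) - 1$ is not justified. Your argument assumes a presentation $P^1 \to P^0 \to V \to 0$ with $\gd(P^1) = \hd_1(V)$. Over a field one can take a minimal presentation, but the theorem is stated over an arbitrary commutative ring $k$, where minimal presentations need not exist. What the $D$-applied presentation actually gives is $\hd_1(DV) \leqslant \gd(P^1) - 1$, and a priori one can only arrange $\gd(P^1) \leqslant \max\{\gd(V), \hd_1(V)\}$ (since $\Tor_1(\tC/\mi, V)$ agrees with $K/\mi K$ only in degrees above $\gd(P^0)$). When $\gd(V) > \hd_1(V)$ this weakens the inductive constant and the intended bound $a+b+s-1$ no longer propagates.

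Second, and more seriously, the circularity you flag yourself is fatal as stated. From $\Tor_{s+1}(\tC/\mi, DV) \to \Tor_s(\tC/\mi, V) \to \Tor_s(\tC/\mi, \Sigma V)$, the induction controls the left term, but the only available estimate for the right term is $\hd_s(\Sigma V) \leqslant \hd_s(V)$, which is precisely the quantity being bounded. Your proposed fix — that the map $V \to \Sigma V$ ``lowers internal degrees in a controlled way'' — does not translate into anything at the level of $\Tor$: the functor $\Sigma$ does not commute with $\tC/\mi \otimes_{\tC} -$, so there is no simple degree-shifting relation between $\Tor_s(\tC/\mi, V)$ and $\Tor_s(\tC/\mi, \Sigma V)$, and the connecting map has no reason to vanish or drop degrees. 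The proofs in the cited sources escape this by genuinely different machinery: Church and Ellenberg \cite{CE} build an explicit Koszul-type complex and prove an acyclicity statement for it, and \cite{L2}, \cite{R} give variants in the same vein. Some input of that kind is needed to break the circularity; the long exact sequence of $\Tor$ for $V \hookrightarrow \Sigma V$ alone will not do it.
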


\begin{remark} \normalfont
These two inequalities were proved by Church and Ellenberg in \cite{CE} for $\FI$-modules over any commutative ring, and the second inequality was generalized to $\FI_G$-modules by Ramos in \cite{R}. Another proof was given in \cite{L2} by the author, which is still valid in this general setup.
\end{remark}

We get the following corollary.

\begin{corollary}
Let $0 \to U \to V \to W \to 0$ be a short exact sequence of $\C$-modules, and suppose that $V$ is presented in finite degrees and $U$ is generated in finite degrees. Then all terms are presented in finite degrees.
\end{corollary}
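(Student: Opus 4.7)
The plan is to chase the three generating and first-homological degrees around the long exact sequence of $\Tor^{\tC}_\ast(\tC/\mi, -)$ associated to $0 \to U \to V \to W \to 0$, using Theorem 3.1 as the key bootstrap to pass from bounds on $\gd$ and $\hd_1$ to bounds on higher $\hd_s$. Since $V$ is already presented in finite degrees by hypothesis, only $W$ and $U$ require work; the natural order is to handle $W$ first, then feed the resulting information back to $U$.

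For $W$, the right exact sequence
\begin{equation*}
\Tor^{\tC}_0(\tC/\mi, V) \to \Tor^{\tC}_0(\tC/\mi, W) \to 0
\end{equation*}
immediately yields $\gd(W) \leqslant \gd(V) < \infty$. For $\hd_1(W)$, I would look at the segment
\begin{equation*}
\Tor^{\tC}_1(\tC/\mi, V) \to \Tor^{\tC}_1(\tC/\mi, W) \to \Tor^{\tC}_0(\tC/\mi, U)
\end{equation*}
and observe that $\Tor^{\tC}_1(\tC/\mi, W)$ is squeezed between a quotient of $\Tor^{\tC}_1(\tC/\mi, V)$ and a submodule of $\Tor^{\tC}_0(\tC/\mi, U)$, so that
\begin{equation*}
\hd_1(W) \leqslant \max\{\hd_1(V),\, \gd(U)\} < \infty.
\end{equation*}
Hence $W$ is presented in finite degrees.

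Now comes the bootstrap: because $W$ is presented in finite degrees, Theorem 3.1 applies to $W$ and gives $\hd_s(W) < \infty$ for every $s$; in particular, $\hd_2(W) < \infty$. Plugging this into the piece
\begin{equation*}
\Tor^{\tC}_2(\tC/\mi, W) \to \Tor^{\tC}_1(\tC/\mi, U) \to \Tor^{\tC}_1(\tC/\mi, V)
\end{equation*}
of the long exact sequence yields $\hd_1(U) \leqslant \max\{\hd_2(W), \hd_1(V)\} < \infty$. Combined with the hypothesis $\gd(U) < \infty$, this shows $U$ is presented in finite degrees, completing the proof.

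The only subtle point is recognizing that one cannot bound $\hd_1(U)$ directly from the given data: the piece of the long exact sequence that controls $\hd_1(U)$ involves $\hd_2(W)$, which is not among the hypotheses and only becomes accessible after $W$ itself is known to be presented in finite degrees. Thus the main structural step is the two-stage argument (handle $W$ first, then use Theorem 3.1 to upgrade and handle $U$), rather than any delicate computation.
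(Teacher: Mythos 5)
Your proposal is correct and follows essentially the same route as the paper: first bound $\gd(W)$ and $\hd_1(W)$ from the long exact sequence, then invoke Theorem 3.1 on $W$ to obtain $\hd_2(W) < \infty$, and finally feed that back into the long exact sequence to bound $\hd_1(U)$. The paper's proof is terser but the two-stage structure and the use of Theorem 3.1 as the bootstrap are exactly what it has in mind.
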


\begin{proof}
Note that since $V$ is presented in finite degrees, all of its homological degrees are finite. Applying the homology functor to the above short exact sequence and considering the induced long exact sequence, we deduce that $W$ is presented in finite degrees, and hence all of its homological degrees are finite. Therefore, all homological degrees of $U$ are finite.
\end{proof}

The Noetherian property of $\C$ over a commutative Noetherian ring is equivalent to saying that the category of finitely generated $\C$-modules is abelian. We prove a similar result for $\C$-modules presented in finite degrees, which is a straightforward consequence of the finiteness of \emph{Castelnuovo-Mumford regularity} (see \cite{CE} for a definition). It was also proved by Ramos in \cite{R1} independently at almost the same time.

\begin{proposition}
The category of $\C$-modules presented in finite degrees is abelian.
\end{proposition}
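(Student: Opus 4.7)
The plan is to verify that the full subcategory of $\C\Mod$ whose objects are presented in finite degrees is closed under kernels and cokernels. Closure under finite biproducts is immediate from the additivity of $\gd$ and $\hd_1$ on direct sums, and the zero object trivially qualifies, so the problem reduces to showing that for any morphism $f : V \to W$ between modules presented in finite degrees, both $\ker f$ and $\coker f$ (and hence $\Image f$) are again presented in finite degrees.

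The approach is to split $f$ via the canonical short exact sequences
\begin{equation*}
0 \to \Image f \to W \to \coker f \to 0, \qquad 0 \to \ker f \to V \to \Image f \to 0,
\end{equation*}
and analyze them in this order. Since $\Image f$ is a quotient of $V$, right exactness of $\Tor^{\tC}_0(\tC/\mi,-)$ gives $\gd(\Image f) \leqslant \gd(V) < \infty$. Applying Corollary 3.3 to the first sequence (with $W$ presented in finite degrees and $\Image f$ generated in finite degrees) then forces both $\Image f$ and $\coker f$ to be presented in finite degrees; in particular $\hd_1(\Image f) < \infty$.

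The remaining and main difficulty is placing $\ker f$ in the subcategory. One cannot simply invoke the fact that $\ker f$ is a submodule of $V$, because without a Noetherian hypothesis on $k$ or finiteness of $G$, generation in finite degrees is not inherited by submodules. Instead I apply the long exact $\Tor^{\tC}_\ast(\tC/\mi, -)$ sequence to the second short exact sequence and extract the exact piece
\begin{equation*}
\Tor^{\tC}_1(\tC/\mi, \Image f) \longrightarrow \Tor^{\tC}_0(\tC/\mi, \ker f) \longrightarrow \Tor^{\tC}_0(\tC/\mi, V).
\end{equation*}
All three terms are $\tC/\mi$-modules, so they are graded and $\td$ of a middle term in an exact sequence is bounded by the maximum of the torsion degrees of the outer terms. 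This yields $\gd(\ker f) \leqslant \max\{\hd_1(\Image f),\, \gd(V)\} < \infty$. Once this bound is in hand, a second application of Corollary 3.3 to the second short exact sequence (now with $V$ presented in finite degrees and $\ker f$ generated in finite degrees) upgrades $\ker f$ to being presented in finite degrees.

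I expect the bound on $\gd(\ker f)$ to be the only nontrivial step, and it is genuinely necessary rather than cosmetic: the absence of a Noetherian hypothesis blocks the usual subobject argument, so what really drives the proof is the finiteness of $\hd_1(\Image f)$, which rests on Corollary 3.3 and ultimately on the inequalities of Theorem 3.1.
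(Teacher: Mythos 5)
Your proof is correct and follows essentially the same route as the paper's: splitting $f$ through its image, applying Corollary 3.3 to the sequence $0 \to \Image f \to W \to \coker f \to 0$, and then using the long exact $\Tor$ sequence on $0 \to \ker f \to V \to \Image f \to 0$ to bound $\gd(\ker f)$ by $\max\{\hd_1(\Image f), \gd(V)\}$. The paper compresses the last step by reading off finiteness of all $\hd_s(\ker f)$ directly from the long exact sequence rather than invoking Corollary 3.3 a second time, but the underlying mechanism is identical.
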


\begin{proof}
We only need to show that this category is closed under kernels and cokernels of morphisms. Let $\alpha: U \to V$ be a homomorphism between two $\C$-modules presented in finite degrees. It gives rise to two short exact sequences
\begin{align*}
& 0 \to \kernel \alpha \to U \to \Image \alpha \to 0,\\
& 0 \to \Image \alpha \to V \to \coker \alpha \to 0.
\end{align*}
As a quotient of $U$, $\Image \alpha$ is clearly generated in finite degrees. By the previous corollary, all terms in the second short exact sequence are presented in finite degrees. Since all homological degrees of $U$ and $\Image \alpha$ are finite, so are homological degrees of $\kernel \alpha$ by the first short exact sequence.
\end{proof}

In a previous version of this note, the author asked the following question: If $V$ is a $\C$-module satisfying $\td(V) < \infty$ and $\gd(V) < \infty$, is $\td(DV) < \infty$ as well? This question was answered by Ramos in \cite{R1}. His following result, which is a direct corollary of \cite[Theorem D]{CE}, implies a confirmative answer to the above question.

\begin{lemma} [Proposition 3.3, \cite{R1}]
Let $V$ be a submodule of a torsion free $\C$-module $W$. Then $\td(W/V) < \infty$.
\end{lemma}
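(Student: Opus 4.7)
The plan is to reduce the lemma to Theorem 3.1, which bounds $\td(U) \leqslant \gd(U) + \hd_1(U) - 1 < \infty$ whenever $U$ is presented in finite degrees. The task is therefore to establish that $W/V$ is presented in finite degrees under the implicit standing assumptions of Section 3 (namely $V$ and $W$ both presented in finite degrees); the example recalled in Section 1, where $\bigoplus_{i\geqslant 1} P/\mi^i P$ is generated in a single degree yet has infinite torsion degree, shows that some such finiteness on $V$ is indeed necessary.

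The main step is a $\Tor$ long exact sequence argument. Applying $\Tor_{\ast}^{\tC}(\tC/\mi, -)$ to $0 \to V \to W \to W/V \to 0$ and reading off degrees immediately yields
\begin{equation*}
\gd(W/V) \leqslant \gd(W), \qquad \hd_1(W/V) \leqslant \max\{\hd_1(W),\ \gd(V)\}.
\end{equation*}
Hence $W/V$ is presented in finite degrees whenever $W$ is presented in finite degrees and $V$ is generated in finite degrees, and Theorem 3.1 delivers $\td(W/V) < \infty$.

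The main obstacle---and the reason torsion-freeness of $W$ is singled out---is to obtain an effective bound on $\td(W/V)$ in terms of invariants of $W$ alone, independent of any invariants of $V$, so that the bound can be applied uniformly (this is what is needed, for example, to confirm $\td(DV) < \infty$ from $\gd(V), \td(V) < \infty$ in the motivating question). For this I would follow Ramos's strategy and invoke Theorem D of \cite{CE}, cited just before the lemma: the Castelnuovo--Mumford regularity of a torsion-free $\C$-module presented in finite degrees is uniformly controlled, and that control passes to $\hd_1(W/V)$, hence via Theorem 3.1 to $\td(W/V)$. Torsion-freeness of $W$ is precisely what makes the regularity machinery of \cite{CE} directly applicable, since in its presence one avoids the separate torsion contribution to regularity that would otherwise have to be handled.
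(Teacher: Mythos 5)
Your first paragraph does not prove Lemma 3.5; it proves something weaker. The bound $\hd_1(W/V) \leqslant \max\{\hd_1(W),\, \gd(V)\}$ from the $\Tor$ long exact sequence requires $\gd(V) < \infty$, a hypothesis the lemma does not make, and under those extra hypotheses the argument never uses that $W$ is torsion free. Indeed, what you have shown is essentially Corollary 3.3 plus Theorem 3.1, which is true for any $W$ presented in finite degrees and any submodule $V$ generated in finite degrees. The entire point of Lemma 3.5 is to drop every finiteness assumption on $V$, and torsion-freeness of $W$ is what makes this possible.

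The gap is not cosmetic: it would make the paper circular. Lemma 3.5 is applied in Corollary 3.6 with $V := V_F$ as a submodule of $W := \Sigma V_F$, and at that point in the development one only knows $\gd(V_F) < \infty$ and $\td(V) < \infty$; one does \emph{not} know $\hd_1(V_F) < \infty$, since that is precisely what Theorem 3.7 establishes, and Theorem 3.7 depends on Corollary 3.6. So an argument that requires $W = \Sigma V_F$ to be presented in finite degrees cannot be used here. Your second paragraph acknowledges the problem but does not resolve it: asserting that ``the regularity machinery of \cite{CE}'' passes a bound to $\hd_1(W/V)$ is not an argument, and regularity of torsion-free modules is itself downstream of results like this one.

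The paper's actual proof is much more elementary and avoids $\Tor$ entirely. It is a direct contradiction argument: if $\td(W/V) = \infty$, then for arbitrarily large $n$ there is a nonzero class $\bar w_n \in W_n/V_n$ killed by the map to $W_{n+1}/V_{n+1}$, i.e.\ a lift $w_n \in W_n$ whose image in $W_{n+1}$ lands in $V_{n+1}$. Theorem D of \cite{CE} (a saturation statement for submodules of torsion-free modules) says that for $n \gg 0$ one has $V_{n+1} \cap W_n = V_n$, forcing $w_n \in V_n$ and hence $\bar w_n = 0$, a contradiction. Torsion-freeness of $W$ is exactly the hypothesis that makes Theorem D applicable, and no a priori finiteness on $V$ or on $\hd_1(W)$ is needed. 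You should replace the $\Tor$-based reduction with this element-chasing argument, or at least prove that whatever finiteness you need can be obtained without appealing to Theorem 3.7.
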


\begin{proof}
The injection $V \to W$ gives the following commutative diagram
\begin{equation*}
\xymatrix{
\ldots \ar[r] & V_n \ar[r] \ar[d] & V_{n+1} \ar[r] \ar[d] & \ldots\\
\ldots \ar[r] & W_n \ar[r] \ar[d]^{p_n} & W_{n+1} \ar[r] \ar[d]^{p_{n+1}} & \ldots\\
\ldots \ar[r] & W_n/V_n \ar[r] & W_{n+1}/V_{n+1} \ar[r] & \ldots
}
\end{equation*}
where $p_n$ and $p_{n+1}$ are projections.

If $\td(W/V) = \infty$, then for a large enough $n$, one can find an element $0 \neq \bar{w}_n \in W_n/V_n$ such that its image in $W_{n+1}/V_{n+1}$ is 0. Let $w_n \in W_n$ be an element in $p_n^{-1} (\bar{w}_n)$. Then the image of $w_n$ in $W_{n+1}$ is also contained in $V_{n+1}$. However, according to \cite[Theorem D]{CE}, \footnote{As explained in \cite{R}, the conclusion actually holds for $\FI_G$-modules, though only $\FI$-modules were considered in this theorem.} for $n \gg 0$, by taking $a = 1$, one has $V_{n+1} \cap W_n = V_n$. That is, $w_n \in V_n$, so $\bar{w}_n = 0$. The conclusion follows by this contradiction.
\end{proof}

\begin{corollary}
Let $V$ be a $\C$-module satisfying $\td(V) < \infty$ and $\gd(V) < \infty$. Then $\td(DV) < \infty$.
\end{corollary}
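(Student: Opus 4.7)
The plan is to split $V$ into its torsion and torsion-free parts, bound $\td(D-)$ on each summand by a different technique, and then recombine via left-exactness of $\Hom_{\tC}(kG_m,-)$. First, I would apply the derivative functor to the canonical short exact sequence $0 \to V_T \to V \to V_F \to 0$. Exactly as in the proof of Lemma 2.2, because $V_F$ is torsion free the natural map $V_F \to \Sigma V_F$ is injective, so the snake lemma applied to the resulting $3\times 3$ diagram produces a short exact sequence
\[
0 \to D(V_T) \to D(V) \to D(V_F) \to 0.
\]

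To bound $\td(D(V_T))$ I would use a direct support argument. The hypothesis $\td(V) < \infty$ forces $V_T$ itself to be concentrated in degrees $\leq \td(V)$: for any nonzero $v \in (V_T)_n$, pick the minimal $k \geq 1$ with $\mi^k v = 0$; then any nonzero $w \in \mi^{k-1}v$ satisfies $\mi w = 0$ and lies in some degree $m \geq n$ (every non-invertible morphism in $\C$ strictly raises degree), so $\Hom_{\tC}(kG_m, V) \neq 0$ and $\td(V) \geq m \geq n$. Consequently both $\Sigma V_T$ and its quotient $D(V_T)$ are supported in degrees $\leq \td(V) - 1$, giving $\td(D(V_T)) \leq \td(V) - 1 < \infty$ without any further input.

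For the torsion-free summand I would invoke Lemma 3.5. Since $V_F$ is torsion free so is $\Sigma V_F$, the canonical map $V_F \hookrightarrow \Sigma V_F$ is injective, and its cokernel is $D(V_F)$; the hypothesis $\gd(V) < \infty$ forces $\gd(V_F) < \infty$ (as $V_F$ is a quotient of $V$), which supplies the finiteness condition needed to apply Theorem~D of \cite{CE} inside the proof of Lemma 3.5, and the lemma then delivers $\td(D(V_F)) < \infty$. Finally, applying the left-exact functor $\Hom_{\tC}(kG_m,-)$ to the displayed short exact sequence and taking the supremum over $m$ gives $\td(D(V)) \leq \max\{\td(D(V_T)),\,\td(D(V_F))\} < \infty$. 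I expect the only delicate point to be checking that Lemma 3.5 is genuinely applicable to the inclusion $V_F \hookrightarrow \Sigma V_F$ with merely $\gd(V_F) < \infty$ in hand; every other step is routine bookkeeping with the snake lemma and the definition of torsion degree.
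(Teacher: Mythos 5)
Your proof is correct and follows essentially the same route as the paper: decompose via $0 \to V_T \to V \to V_F \to 0$, use the snake lemma (with injectivity of $V_F \to \Sigma V_F$) to get $0 \to DV_T \to DV \to DV_F \to 0$, bound $\td(DV_T)$ using that $V_T$ is torsion with $\td(V_T) = \td(V) < \infty$, and bound $\td(DV_F)$ by Lemma 3.5. You fill in more detail than the paper on the torsion summand (the paper simply calls it ``clear'') and are appropriately careful about the implicit finiteness hypothesis needed in Lemma 3.5, but the structure of the argument is identical.
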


\begin{proof}
The exact sequence $0 \to V_T \to V \to V_F \to 0$, where $V_T$ is the maximal torsion submodule of $V$ and $V_F$ is torsion free, induces the following diagram
\begin{equation*}
\xymatrix{
0 \ar[r] & V_T \ar[r] \ar[d] & V \ar[r] \ar[d] & V_F \ar[r] \ar[d] & 0\\
0 \ar[r] & \Sigma V_T \ar[r] \ar[d] & \Sigma V \ar[r] \ar[d] & \Sigma V_F \ar[r] \ar[d] & 0\\
0 \ar[r] & DV_T \ar[r] & DV \ar[r] & DV_F \ar[r] & 0.
}
\end{equation*}
Indeed, since the map $V_F \to \Sigma V_F$ is injective, the bottom row is exact by the snake lemma. Therefore, to validate the conclusion, it suffices to show $\td(DV_F) < \infty$ and $\td(DV_T) < \infty$. But the first inequality follows from the previous lemma, and the second one is clear since $V_T$ is a torsion module with $\td(V_T) = \td(V) < \infty$.
\end{proof}

In \cite{R1}, Ramos proved the following beautiful result. For the convenience of the reader, we include a slightly different proof.

\begin{theorem}[Theorem B, \cite{R1}]
A $\C$-module $V$ is presented in finite degrees if and only if both $\gd(V)$ and $\td(V)$ are finite.
\end{theorem}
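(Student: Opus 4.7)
The forward direction is immediate: if $V$ is presented in finite degrees then $\gd(V)<\infty$ by definition, and the first inequality of Theorem~3.1 gives $\td(V)\leq \gd(V)+\hd_1(V)-1<\infty$. The real content is the converse, so I assume $\gd(V),\td(V)<\infty$ and aim to show $\hd_1(V)<\infty$. My plan is to induct on $\gd(V)$; the base case $\gd(V)=-\infty$ (so $V=0$) is trivial.

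For the inductive step, fix a free cover $\pi\colon P\twoheadrightarrow V$ with $\gd(P)=\gd(V)$; setting $K=\ker\pi$ reduces the claim to $\gd(K)=\hd_1(V)<\infty$. Since $K$ is a submodule of the torsion free module $P$, both $K\to\Sigma K$ and $P\to\Sigma P$ are injective by \cite[Lemma~3.6]{CE}, so the snake lemma applied to the ladder with rows $0\to K\to P\to V\to 0$ and its shift produces
$$0\to V^0\to DK\to DP\to DV\to 0,\qquad V^0:=\ker(V\to\Sigma V).$$

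Now the inductive hypothesis kicks in on $DV$: Lemma~1.5 gives $\gd(DV)<\gd(V)$, and Corollary~3.5 gives $\td(DV)<\infty$, so $DV$ is presented in finite degrees and in particular $\hd_1(DV)<\infty$. Breaking the four-term sequence into $0\to K'\to DP\to DV\to 0$ and $0\to V^0\to DK\to K'\to 0$, the first short exact sequence together with the finiteness of $\gd(DP)$ forces $\gd(K')<\infty$. Once $\gd(V^0)$ is also bounded, the second yields $\gd(DK)<\infty$, and the contrapositive of Lemma~1.5 upgrades this to $\gd(K)<\infty$, closing the induction.

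The step I expect to require the most care is the bound on $\gd(V^0)$. The key point is that $V^0$ is actually a torsion module: if $v\in V_n$ is killed by the standard morphism $[n]\to[n+1]$, then using the transitivity of the $G_{n+1}$-action on morphisms out of $[n]$ one checks that every morphism from $[n]$ to any higher $[m]$ also kills $v$. Since $V^0$ is torsion and sits inside $V$, one has $\gd(V^0)\leq\td(V^0)\leq\td(V)<\infty$, supplying the missing ingredient.
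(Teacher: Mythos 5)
Your proof is correct and takes a genuinely different route from the paper's. Both arguments run the same outer induction on $\gd(V)$ and both invoke Corollary~3.5 to get $\td$ of a derivative under control, but they diverge in how they handle the torsion contribution. The paper first splits off the torsion part via $0\to V_T\to V\to V_F\to 0$, treats $\hd_1(V_T)$ by citing \cite[Theorem~1.5]{Li}, and then transfers finiteness of $\hd_1(DV_F)$ back to $\hd_1(V_F)$ using \cite[Corollary~2.10, Proposition~2.11]{LY}. You instead work with $V$ directly: applying $D$ to a free cover $0\to K\to P\to V\to 0$ (using injectivity of $K\to\Sigma K$ and $P\to\Sigma P$ since both embed in a free module) produces the four-term sequence $0\to V^0\to DK\to DP\to DV\to 0$ with $V^0=\ker(V\to\Sigma V)$, and the crucial new observation is that $V^0$ is exactly the socle $\{v : \mi v=0\}$ of $V$, so $\gd(V^0)\le\td(V^0)\le\td(V)<\infty$ for free; combining this with $\gd(K')<\infty$ from the first chunk and the inductive hypothesis on $DV$ yields $\gd(DK)<\infty$, which Lemma~1.5 (its contrapositive) upgrades to $\gd(K)<\infty$. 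The net effect is that your argument dispenses with the two external citations to \cite{Li} and \cite{LY} that the paper relies on, making the proof more self-contained within the note's own toolbox, at the cost of a slightly less conceptually transparent decomposition than the torsion/torsion-free split. One small presentational point: the identification $\gd(K)=\hd_1(V)$ requires the cover to be minimal (i.e.\ $K\subseteq\mi P$); for an arbitrary free cover you only get $\hd_1(V)\le\gd(K)$, but that inequality is all your argument actually uses, so there is no gap.
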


\begin{proof}
If $V$ is presented in finite degrees, then $\td(V) < \infty$ by Theorem 3.1. Conversely, suppose that $\gd(V) < \infty$ and $\td(V) < \infty$, and we want to show that $\hd_1(V) < \infty$. The exact sequence $0 \to V_T \to V \to V_F \to 0$ tells us that it suffices to prove $\hd_1(V_T) < \infty$ and $\hd_1(V_F) < \infty$. The first inequality follows from \cite[Theorem 1.5]{Li}. To prove the second one, we note that $\gd(DV_F)$ and $\td(DV_F)$ are finite as well. Moreover, $\gd(DV_F) \leqslant \gd(DV) < \gd(V)$. By an induction on the generating degree, we deduce that $\hd_1(DV_F) < \infty$, so $\hd_1(V_F) < \infty$ as wanted by \cite[Corollary 2.10, Proposition 2.11]{LY}. \footnote{The assumption that $V$ is finitely generated in these two results is inessential since the arguments apply to all modules.}
\end{proof}

This theorem can be used to establish the following important result. For a definition of \emph{filtered modules}, see \cite{L2, LR, LY}.

\begin{theorem}
Let $k$ be a commutative ring, and let $V$ be a $\C$-module presented in finite degrees. Then $\Sigma_n V$ is a filtered module for $n \gg 0$.
\end{theorem}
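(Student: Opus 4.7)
The plan is to prove this by induction on $d = \gd(V)$, which by Theorem 3.6 is finite together with $\td(V)$. The case $V = 0$ (i.e.\ $d = -\infty$) is vacuous.

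First I reduce to the torsion-free case. In the short exact sequence $0 \to V_T \to V \to V_F \to 0$, the torsion part satisfies $\gd(V_T) \leq \td(V_T) = \td(V) < \infty$, so by Theorem 3.6 it is presented in finite degrees; then Corollary 3.3 forces $V_F$ to be presented in finite degrees as well. Since $\Sigma_n V_T = 0$ for $n > \td(V)$, one has $\Sigma_n V \cong \Sigma_n V_F$, so I may replace $V$ by $V_F$ and assume $V$ is torsion-free.

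Now I apply the inductive hypothesis to $DV$. By Lemma 1.5, $\gd(DV) < d$ (counting $DV = 0$ as $\gd = -\infty$); by Corollary 3.5, $\td(DV) < \infty$; hence by Theorem 3.6, $DV$ is presented in finite degrees. Induction yields some $M_1$ such that $\Sigma_m DV$ is filtered for every $m \geq M_1$. Since $\Sigma$ and $D$ commute, shifting the canonical sequence $0 \to V \to \Sigma V \to DV \to 0$ gives
\begin{equation*}
0 \to \Sigma_m V \to \Sigma_{m+1} V \to \Sigma_m DV \to 0
\end{equation*}
for each $m \geq 0$. Extensions of a filtered module by a filtered module are themselves filtered (one concatenates the two filtrations, lifting the quotient filtration via preimages), so once I establish that $\Sigma_{M_0} V$ is filtered for a single $M_0 \geq M_1$, iterating these sequences propagates filteredness to $\Sigma_n V$ for every $n \geq M_0$.

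The main obstacle is producing this initial filtered $\Sigma_{M_0} V$. My strategy would be to consider the natural surjection from a semi-induced cover built from the top-degree datum $(\Sigma_m V)_d$, viewed as a $k G_d$-module, onto $\Sigma_m V$, and to argue that for $m$ sufficiently large the kernel is presented in finite degrees with generating degree strictly less than $d$, so that the inductive hypothesis applies to the kernel and exhibits $\Sigma_{M_0} V$ as an extension of an induced module by a filtered one. Verifying that the kernel's generating degree actually drops after shifting is the hardest step; it ultimately relies on the finiteness of the Castelnuovo--Mumford regularity of $V$, which is available because $V$ is presented in finite degrees.
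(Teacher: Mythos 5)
Your reduction to the torsion-free case is correct and matches the paper's implicit reduction, and your observation that the sequences $0 \to \Sigma_m V \to \Sigma_{m+1}V \to \Sigma_m DV \to 0$ together with closure of filtered modules under extensions would propagate filteredness once you had a single filtered $\Sigma_{M_0}V$ is a valid and even elegant bootstrap. The problem is that you then stop precisely at the point where the real content lies, and your sketch for the ``initial case'' does not obviously close the gap.

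The paper bypasses the need for a separate initial case entirely by invoking the following fact (proved in \cite{LY} for finitely generated modules over Noetherian rings, and in \cite{G} in general): \emph{if $V$ is a torsion-free $\C$-module with $\gd(V) < \infty$ and $DV$ is filtered, then $V$ is filtered.} With this in hand, once you know $\Sigma_N DV = D\Sigma_N V$ is filtered and $\Sigma_N V$ is torsion free with finite generating degree, you immediately get $\Sigma_N V$ filtered --- no anchoring $M_0$ needed. Your proposed substitute, a surjection from a semi-induced cover built from $(\Sigma_m V)_d$ with the hope that its kernel has strictly smaller generating degree, has two concrete problems. First, filtered modules are closed under extensions but not under quotients: if $0 \to K \to F \to \Sigma_m V \to 0$ with $F$ semi-induced and $K$ filtered, the long exact sequence only gives $H_i(\Sigma_m V) = 0$ for $i \geqslant 2$; you would additionally need $H_0(K) \to H_0(F)$ injective, which is not automatic. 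Second, and more fundamentally, establishing that the cover can be chosen so that (after enough shifts) the kernel is filtered and the map on $H_0$ is injective is, in substance, a proof of the key proposition itself --- so you have not reduced to something easier, you have relocated the difficulty. You should locate and invoke the $D$-to-$V$ filteredness statement (or reproduce Gan's long-exact-sequence argument for it); once you have that, your induction on $\gd(V)$ closes in two lines, and the bootstrap machinery you set up becomes unnecessary.
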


\begin{proof}
To prove this result, we use the following key proposition: Let $V$ be a torsion free $\C$-module with $\gd(V) < \infty$. If $DV$ is filtered, so is $V$. This was proved in \cite[Section 3]{LY} for finitely generated $\C$-modules over Noetherian rings, but the argument actually works for modules generated in finite degrees over any commutative ring. Another proof of this fact was recently described in \cite{G}.

Now let $V$ be an $\C$-module presented in finite degrees. By the above theorem and Corollary 3.6, both $\td(V)$ and $\td(DV)$ are finite. Therefore, for $N \gg 0$, one gets a short exact sequence
\begin{equation*}
0 \to \Sigma_N V \to \Sigma_{N+1} V \to \Sigma_N D V \to 0,
\end{equation*}
such that all terms are torsion free. By the induction hypothesis on generating degree, $D \Sigma_N V \cong \Sigma_N DV$ is filtered. By the key proposition, $\Sigma_N V$ is filtered.
\end{proof}

Theorem 3.7 and Lemma 2.2 also provide us an almost trivial proof of the Noetherianity of $\FI_G$ over commutative Noetherian rings, where $G$ is a finite group.

\begin{proof}[The second proof of local Noetherianity of $\FI_G$]
Take an arbitrary $n \geqslant 0$ and an arbitrary submodule $V$ of $P = \tC e_n$. One needs to show that $\gd(V) < \infty$. Consider the short exact sequence
\begin{equation*}
0 \to V \to P \to W \to 0.
\end{equation*}
Clearly, $W$ is finitely generated, so $\gd(W) < \infty$. By Lemma 2.2, $\td(W) < \infty$. By Theorem 3.7, $W$ is presented in finite degrees, so $\hd_1(W) < \infty$. By \cite[Corollary 2.10]{LY}, $\gd(V) < \infty$.
\end{proof}

In the rest of this note we investigate \emph{local cohomology groups} $H^i_\mi(V)$ (see \cite{LR} for their definition) of $\C$-modules presented in finite degrees. Since arguments in \cite{LR} are still valid in this wider framework, we omit the proofs. These results were proved by Ramos independently in \cite{R1}, where details are included. 

\begin{theorem}[\cite{LR}, Theorem E]
Let $k$ be a commutative ring, and let $V$ be a $\C$-module presented in finite degrees. Then there is a finite complex of filtered modules (except the first term $V$)
\begin{equation*}
\mathcal{C^\bullet}V : 0 \rightarrow V \rightarrow F^0 \rightarrow \ldots \rightarrow F^{n} \rightarrow 0
\end{equation*}
such that the cohomologies $H^i(\mathcal{C}^\bullet V)$ are torsion modules satisfying
\begin{equation*}
\begin{cases} \td(H^{i}(C^{\bullet}V)) = \td(V) &\text{ if $i = -1$}\\ \td(H^{i}(C^{\bullet}V)) \leqslant 2\gd(V) - 2i  - 2 &\text{ if $0 \leq i \leq n$.}\end{cases}
\end{equation*}
Moreover, $H^{i+1}_{\mi}(V) \cong H^i(\mathcal{C}^\bullet V)$ for $i \geqslant -1$.
\end{theorem}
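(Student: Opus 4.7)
The plan is to build $\mathcal{C}^\bullet V$ by a recursion that at each stage shifts the current module far enough to land in the filtered range (guaranteed by Theorem 3.7), and then to identify the resulting complex with local cohomology via the standard acyclic-resolution argument from \cite{LR}.

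Starting from $V$, Theorem 3.4 gives $\td(V) < \infty$, and Theorem 3.7 supplies an integer $N_0$ such that $F^0 := \Sigma_{N_0} V$ is filtered; choosing $N_0 > \td(V)$ ensures that the canonical map $\iota^0 : V \to F^0$ has kernel exactly $V_T$, so $H^{-1}(\mathcal{C}^\bullet V) = V_T$ as required. Set $W^0 := \coker(\iota^0)$; by Proposition 3.4 it is presented in finite degrees. Iterating, for each $i \geq 0$ I pick $N_{i+1} > \td(W^i)$ with $F^{i+1} := \Sigma_{N_{i+1}} W^i$ filtered, write $\iota^{i+1} : W^i \to F^{i+1}$ for the canonical map (with kernel $W^i_T$), set $W^{i+1} := \coker(\iota^{i+1})$, and define the differential $d^i : F^i \to F^{i+1}$ as the composite $F^i \twoheadrightarrow W^i \xrightarrow{\iota^{i+1}} F^{i+1}$. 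A direct diagram chase then gives $H^i(\mathcal{C}^\bullet V) = W^i_T$ for $i \geq 0$, which is manifestly torsion.

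Termination of the recursion follows from showing that $\gd(W^i)$ strictly decreases at each step, which in turn comes from propagating the regularity inequalities of Theorem 3.1 through the short exact sequences $0 \to W^{i-1}/W^{i-1}_T \to F^i \to W^i \to 0$, using that each $F^i$ is filtered and hence well-behaved in the Castelnuovo--Mumford sense. The same inductive bookkeeping delivers the torsion-degree bound $\td(H^i) = \td(W^i_T) \leq 2\gd(V) - 2i - 2$. For the identification with local cohomology, the key fact (established in \cite{LR} for filtered modules) is that each $F^i$ is acyclic for $\Gamma_\mi$; combined with the recursive structure, the long exact sequences in local cohomology for the $F^i$'s yield isomorphisms $H^{j+1}_\mi(W^{i-1}) \cong H^j_\mi(W^i)$ that telescope into $H^{i+1}_\mi(V) \cong H^0_\mi(W^i) = W^i_T = H^i(\mathcal{C}^\bullet V)$.

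The main obstacle is the sharp torsion-degree bound $\td(H^i) \leq 2\gd(V) - 2i - 2$: getting the linear slope of $2$ requires simultaneous control of both $\gd$ and $\hd_1$ through every step of the recursion, with repeated appeals to Theorem 3.1. This doubling factor is characteristic of Castelnuovo--Mumford regularity, and verifying it in the present setting essentially recapitulates the key technical estimates of \cite{CE, LR}.
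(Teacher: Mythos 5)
Your construction---inductively mapping each stage into a filtered shift and taking cokernels, with $F^0 = \Sigma_{N_0}V$, $W^0 = \coker(V\to F^0)$, and so on---is essentially the argument from \cite{LR} that the paper defers to, and your identification of $H^i(\mathcal{C}^\bullet V)$ with the torsion parts $W^i_T$, together with the telescoping through the local-cohomology long exact sequences, are both correct. (Note the minor citation slips: it is Theorem 3.1, not ``Theorem 3.4,'' that gives $\td(V)<\infty$, and Theorem 3.8, not 3.7, that guarantees a filtered shift.)

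The place where your sketch is genuinely thin is exactly where you flag it: both termination and the numerical bound. Applying Theorem 3.1 directly to the sequence $0 \to W^{i-1}/W^{i-1}_T \to F^i \to W^i \to 0$ only yields $\gd(W^i)\leqslant \gd(F^i)\leqslant \gd(W^{i-1})$, which is not a strict drop. The strict drop instead comes from the observation that, once $N_i > \td(W^{i-1})$, one has $\Sigma_{N_i}(W^{i-1}_T)=0$, so $F^i \cong \Sigma_{N_i}U$ with $U := (W^{i-1})_F$, and $W^i \cong \Sigma_{N_i}U/U$ carries a finite filtration whose graded pieces are $\Sigma_jDU$ for $0\leqslant j < N_i$, each of generating degree at most $\gd(U)-1$. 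Hence $\gd(W^i)\leqslant \gd(W^{i-1})-1\leqslant \gd(V)-i-1$ and the recursion stops after at most $\gd(V)$ steps. For the torsion bound, use that filtered modules satisfy $\Tor_s^{\tC}(\tC/\mi, F^i)=0$ for $s\geqslant 1$; the long exact sequence for $0\to U\to F^i\to W^i\to 0$ then gives $\hd_1(W^i)\leqslant \gd(U)\leqslant \gd(W^{i-1})\leqslant \gd(V)-i$, and Theorem 3.1 yields
\begin{equation*}
\td(H^i) \leqslant \td(W^i) \leqslant \gd(W^i)+\hd_1(W^i)-1 \leqslant (\gd(V)-i-1)+(\gd(V)-i)-1 = 2\gd(V)-2i-2.
\end{equation*}
With these two refinements filled in, the proof is complete and agrees with the cited one.
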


\begin{theorem}[\cite{LR}, Theorem F]
Let $V$ be as in the previous theorem. Then
\begin{enumerate}
\item the smallest $N$ such that $\Sigma_N V$ is a filtered module is
\begin{equation*}
\max\{\td(H^i_\mi(V))\mid i \geqslant 0\} + 1 \leqslant \max\{\td(V),2\gd(V) - 2\} + 1.
\end{equation*}
\item The Castelnuovo-Mumford regularity of $V$, $\reg(V)$ satisfies the bounds
\begin{equation*}
\reg(V) \leqslant \max\{\td(H^i_\mi(V)) + i\} \leqslant \max\{2\gd(V ) - 1, \td(V )\}.
\end{equation*}
\end{enumerate}
\end{theorem}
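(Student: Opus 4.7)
The plan is to read both statements off of the complex $\mathcal{C}^{\bullet}V$ produced by Theorem 3.9, using two key facts: the shift functor $\Sigma_N$ is exact, preserves filtered modules, and annihilates every torsion module of torsion degree at most $N-1$; while a filtered module is torsion free and has vanishing higher local cohomology, so $H^i_{\mi}(F) = 0$ for all $i \geqslant 0$ whenever $F$ is filtered.

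For part (1), set $M = \max\{\td(H^i_{\mi}(V)) \mid i \geqslant 0\}$. To show $\Sigma_N V$ is filtered for $N \geqslant M+1$, I apply $\Sigma_N$ to $\mathcal{C}^{\bullet}V$. Each cohomology $H^i(\mathcal{C}^{\bullet}V) \cong H^{i+1}_{\mi}(V)$ is torsion of torsion degree at most $M < N$, so vanishes after the shift. Hence
\begin{equation*}
0 \to \Sigma_N V \to \Sigma_N F^0 \to \Sigma_N F^1 \to \cdots \to \Sigma_N F^n \to 0
\end{equation*}
is exact with each $\Sigma_N F^i$ filtered. Splitting into short exact sequences $0 \to Z^{i-1} \to \Sigma_N F^i \to Z^i \to 0$ with $Z^{-1} = \Sigma_N V$ and $Z^n = 0$, and working from the right, each $Z^i$ is the kernel of a surjection between filtered modules, which is again filtered by the argument used in Theorem 3.7 (cf.\ \cite{LY, G}); in particular $\Sigma_N V = Z^{-1}$ is filtered. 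Conversely, if $\Sigma_N V$ is filtered then $H^i_{\mi}(\Sigma_N V) = 0$ for all $i \geqslant 0$; since $H^i_{\mi}$ commutes with $\Sigma_N$ and $H^i_{\mi}(V)$ is torsion, this forces $N > \td(H^i_{\mi}(V))$ for every $i$, i.e.\ $N \geqslant M+1$. The stated numerical bound is then immediate from Theorem 3.9: $\td(H^0_{\mi}(V)) = \td(V)$ and $\td(H^j_{\mi}(V)) \leqslant 2\gd(V) - 2j$ for $j \geqslant 1$, whose maximum over $j \geqslant 1$ is $2\gd(V) - 2$, attained at $j=1$.

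For part (2), I would again slice $\mathcal{C}^{\bullet}V$ into short exact sequences, this time without shifting. Since each $F^i$ is filtered with $\reg(F^i) = \gd(F^i)$, iteratively applying the standard inequality $\reg(A) \leqslant \max\{\reg(B), \reg(C)+1\}$ to the resulting short exact sequences linking $V$ to the $F^i$ through the cycles, images, and cohomologies of the complex propagates a bound on $\reg(V)$ in terms of the regularities of the $F^i$ and the torsion degrees of the $H^i_{\mi}(V)$, with the homological index $i$ entering as the number of times the $+1$ is incurred; this yields $\reg(V) \leqslant \max_{i \geqslant 0}\{\td(H^i_{\mi}(V)) + i\}$. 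The remaining estimate $\max\{\td(H^i_{\mi}(V)) + i\} \leqslant \max\{2\gd(V) - 1, \td(V)\}$ is a direct arithmetic consequence of Theorem 3.9: the $i=0$ term contributes $\td(V)$, the $i=1$ term is at most $2\gd(V) - 1$, and for $i \geqslant 2$ one has $\td(H^i_{\mi}(V)) + i \leqslant 2\gd(V) - i \leqslant 2\gd(V) - 2$.

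The main obstacle is the structural assertion that filtered modules are closed under kernels of surjections, which powers the backward induction in part (1). This is not apparent from the definition, but it follows by combining the key proposition invoked in the proof of Theorem 3.7 (a torsion free module $V$ with $\gd(V) < \infty$ and $DV$ filtered is itself filtered) with an induction on generating degree, using the snake lemma to transport a short exact sequence of filtered modules under $D$ (exactness holds because the right-hand term is torsion free). All other ingredients—exactness of $\Sigma$, preservation of filteredness under $\Sigma$ and $D$, and commutativity of $H^i_{\mi}$ with $\Sigma$—are already available in the cited references.
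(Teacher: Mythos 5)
The paper does not actually prove this statement: immediately before it, the author writes that ``Since arguments in \cite{LR} are still valid in this wider framework, we omit the proofs,'' deferring to \cite{LR} and \cite{R1}. So there is no in-paper proof to compare against, and your task is effectively to reconstruct the argument. Your reconstruction follows the route one would expect from \cite{LR}: read both bounds off the complex $\mathcal{C}^\bullet V$ of Theorem~3.9. At the structural level this is sound, and the arithmetic is correct (from Theorem~3.9 one gets $\td(H^0_\mi(V)) = \td(V)$ and, substituting $j = i+1$, $\td(H^j_\mi(V)) \leqslant 2\gd(V) - 2j$ for $j \geqslant 1$, from which both displayed maxima follow with the stated extremizers).

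Your part~(1) is essentially complete. The forward direction (shift by $N > M$ kills all the cohomology, leaving an exact complex of filtered modules, then backward-induct) is right, and you correctly isolate the one nonobvious ingredient, namely that the kernel of a surjection of filtered modules is filtered. Your sketch of this ingredient — apply $D$, use that the quotient is torsion free so $D$ stays exact, induct on generating degree, and finish with the key proposition from the proof of Theorem~3.7 — is the standard argument and handles the base case $DA = 0$ correctly since the zero module is trivially filtered. The converse direction also works, though you should say explicitly which form of ``$H^i_\mi$ commutes with $\Sigma$'' you are invoking (this is proved in \cite{LR}) and that for a \emph{torsion} module $T$ one has $\Sigma_N T = 0$ iff $\td(T) < N$; both facts are needed and are load-bearing.

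Part~(2) is where your proposal is thin. The inequality $\reg(V) \leqslant \max_i\{\td(H^i_\mi(V)) + i\}$ requires a genuine double induction over the complex: split $\mathcal{C}^\bullet V$ into the short exact sequences $0 \to Z^i \to F^i \to B^{i+1} \to 0$ and $0 \to B^i \to Z^i \to H^i(\mathcal{C}^\bullet V) \to 0$, use that each $F^i$ is filtered hence has $\hd_s(F^i) = -\infty$ for $s \geqslant 1$ (so $\reg(F^i) = \gd(F^i)$), and propagate the inequality $\reg(A) \leqslant \max\{\reg(B), \reg(C)+1\}$ from the right end $B^{n+1} = 0$ leftward. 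The ``$+1$ per step'' that you allude to is exactly how the $+i$ appears, but you also need to control $\gd(F^i)$ in terms of $\gd(V)$ and the torsion degrees, which you do not address; this is where the actual content of the regularity bound lives, and you should either supply it or cite where it is carried out in \cite{LR} or \cite{R1}. As written, part~(2) is a plausible plan rather than a proof. Everything else in the proposal is correct in approach.
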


\end{document}